\newtheorem{thm}{Theorem}[section]
\newtheorem{lem}{Lemma}[section]
\newtheorem{conj}{Conjecture}[section]
\newtheorem{exa}{Example}[section]
\newtheorem{cor}{Corollary}[section]
\newtheorem{dfn}{Definition}[section]
\newtheorem{exe}{Exercise}[section]
\newtheorem{openp}{Open Problem}[section]
\newcommand{\Z}{\mathbb{Z}}
\newcommand{\Q}{\mathbb{Q}}
\newcommand{\F}{\mathbb{F}}
\newlength{\bibitemsep}\setlength{\bibitemsep}{.75\baselineskip plus .05\baselineskip minus .05\baselineskip}
\newlength{\bibparskip}\setlength{\bibparskip}{0pt}
\let\oldthebibliography\thebibliography
\renewcommand\thebibliography[1]{%
	\oldthebibliography{#1}%
	\setlength{\parskip}{\bibitemsep}%
	\setlength{\itemsep}{\bibparskip}%
}
\title{Note on the Distribution of the Traces of Frobenius}
\date{}
\author{N. A. Carella}
\begin{document}
	\maketitle

\begin{abstract}
		Let $E$ be a nonsingular elliptic curve over the rational numbers, and let $\tau^n=p^n+1-\#E(\F_{p^n})$. A result in the current literature claims that the normalized traces of Frobenius $\alpha_n=(\tau^n+\overline{\tau}^n)/2p^{n/2}$ are equidistributed on the interval $[-1,1]$ as $n\to \infty$. This short note has two goals: 1. Proposes a correction to this result on the distribution of the traces of Frobenius. 2. Provides a simple elementary proof of this result. More precisely, it is shown that the sequence $\{\alpha_n:n\geq1\}$ is dense but not equidistributed on the interval $[-1,1]$.
			\let\thefootnote\relax\footnote{ \today \date{} \\
				\textit{AMS MSC}: Primary 11K06, 11M50; Secondary 11G20, 14G10. \\
				\textit{Keywords}: Algebraic curve; Traces of Frobenius; Sato-Tate distribution.}
		\end{abstract}
		
\section{Introduction and the Result} \label{S2222}
Let $E$ be a nonsingular elliptic curve over the rational numbers, and let $\tau^n=p^n+1-\#E(\F_{p^n })$. A result in the literature on the theory of algebraic curves claims that the normalized traces of Frobenius $\alpha_n=(\tau^n+\overline{\tau}^n)/2p^{n/2}$ are equidistributed on the interval $[-1,1]$ as $n\to \infty$. \\

This claim would identifies the sequences traces of Frobenius as one of the first collection of sequences of powers of real numbers known to be equidistributed on the interval $[-1,1]$ or any other interval.  In Diophantine analysis, it is known that almost all sequences $\{\theta^n\bmod 1:n\geq1\}$ generated by the powers of real numbers $\theta>0$
are equidistributed on the unit interval $[0,1]$, but there is no specific examples, see \cite{PS1964}, \cite{BM2012}, \cite{AT2004}, et alii, for more details. \\

However, this claim about the sequence of powers of a trace of Frobenius seems to be incorrect. Even the probability distribution function of the trace of Frobenius, which has the form
		
		\begin{equation}\label{eq2222.100}
			f(t)=\frac{1}{\pi}\frac{1}{\sqrt{1-t^2}} \ne \text{constant},
		\end{equation}
		contradicts this claim.\\

The technical materials seem to demonstrate that a sequence of powers of a trace of Frobenius are similar to a sequence of powers of a Salem number, that is, dense in the interval, but not equidistributed as proved in \cite{AT2004}, \cite{DR2008}, \cite{SD2016}, et cetera. In fact, the graphs for the distributions of powers of Salem numbers of degree 4, see Definition \ref{dfn9795.700}, and the graphs for the distributions of the traces of Frobenius are nearly the same, compare the graphs in \cite[Section 4]{AT2004}, and \cite[Figure 1]{AS2023}.  \\
 
The simple elementary proof is given in Lemma \ref{lem9797.130}. The proofs given in \cite[Corollary 2.4.2.]{AS2023}, \cite[Corollary 2.11.]{SA2016}, etc., seem to be incorrect.	\\

\section{Simple Elementary Proof and Correction}\label{S9797}
Write the roots of the $L$-polynomial $L(E,T)=p^nT^2-\alpha_nT+1$ associated to the elliptic curve in the forms		
\begin{equation} \label{eq9797.110}
 			\tau^n=p^{n/2}e^{i n \theta} \quad \quad \text{ and}\quad \quad 
			\overline{\tau}^n=p^{n/2}e^{-i n \theta},
		\end{equation} 
		where $\theta \in[0,\pi]$, $|\alpha_n|\leq 2p^{n/2}$ and the normalized value
		
		\begin{equation} \label{eq9797.120}
			\alpha_n=\frac{\tau^n+\overline{\tau}^n}{2p^{n/2}} =\cos  n \theta
		\end{equation}
		for $n \geq 1$.

\begin{lem} \label{lem9797.130}  Let $E$ be a nonsingular ordinary elliptic curve over the rational numbers, and let $\tau^n=p^n+1-\#E(\F_{p^n})$. Then, the normalized traces of Frobenius $\alpha_n=(\tau^n+\overline{\tau}^n)/2p^{n/2}$ are dense but not equidistributed on the interval $[-1,1]$.
\end{lem}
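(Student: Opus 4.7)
The plan is to reduce the problem, via the parameterization $\alpha_n=\cos n\theta$ from \eqref{eq9797.120}, to a statement about the sequence $\{n\theta \bmod 2\pi\}$, and then transfer density and (non-)equidistribution through the cosine map. The key structural input will be that $\theta/\pi$ is irrational, which is where the \emph{ordinary} hypothesis is actually used.

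First I would show that $\theta/\pi\notin\Q$. Suppose otherwise, so that $e^{i\theta}$ is a primitive $m$-th root of unity $\zeta_m$. Then $\tau=\sqrt{p}\,\zeta_m$ lies in $\Q(\tau)$, and since $\Q(\zeta_m)\subseteq\Q(\tau)$ while $[\Q(\tau):\Q]=2$ for an ordinary $E$, we must have $\varphi(m)\le 2$, i.e.\ $m\in\{1,2,3,4,6\}$. Running through the cases, the trace $a=\tau+\bar\tau=2\sqrt{p}\cos\theta$ becomes $\pm 2\sqrt{p}$, $\pm\sqrt{p}$, or $0$; the first four are not rational integers, and $a=0$ forces $E$ to be supersingular, contradicting the ordinary hypothesis. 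Hence $\theta/\pi$ is irrational.

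Next, by Weyl's equidistribution criterion (applied via the exponential sums $\sum_{n\le N} e^{2\pi i k\theta n/(2\pi)}$, which are bounded geometric sums whenever $k\theta/(2\pi)\notin\Z$), the sequence $\{n\theta \bmod 2\pi:n\ge 1\}$ is equidistributed in $[0,2\pi]$; in particular it is dense there. Composing with the continuous surjection $\cos:[0,2\pi]\to[-1,1]$ then gives density of $\{\alpha_n\}=\{\cos n\theta\}$ in $[-1,1]$.

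For the failure of equidistribution, I would push the uniform measure on $[0,2\pi]$ forward under cosine: a change of variables $t=\cos\varphi$ shows that the limiting distribution of $\alpha_n$ is the arcsine law with density
\begin{equation}\label{eq9797.140}
f(t)=\frac{1}{\pi\sqrt{1-t^2}},\qquad t\in(-1,1),
\end{equation}
exactly the density displayed in \eqref{eq2222.100}. Since $f(t)\ne\tfrac{1}{2}$, the pushforward is not uniform, so $\{\alpha_n\}$ is not equidistributed on $[-1,1]$.

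The main obstacle is the first step: justifying $\theta/\pi\notin\Q$ from the ordinariness assumption. Everything after that is a routine Weyl/change-of-variables argument. One might also want to note that without ordinariness the claim can fail (e.g.\ in the supersingular case $\tau=\sqrt{p}\,i$, where $\alpha_n$ is eventually periodic and takes only finitely many values), which explains why the hypothesis in Lemma~\ref{lem9797.130} is exactly the right one.
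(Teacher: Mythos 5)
Your overall strategy matches the paper's: parameterize $\alpha_n=\cos n\theta$, use equidistribution of $\{n\theta\}$ (granted irrationality of $\theta/\pi$), then push forward under cosine and observe the limiting density is the arcsine law $\frac{1}{\pi\sqrt{1-t^2}}$ rather than the uniform density. The density and non-equidistribution conclusions and the change-of-variables computation are exactly the paper's argument in \eqref{eq9797.140}--\eqref{eq9797.150}. You go further than the paper in one respect: the paper simply cites \cite{PS1964} and \cite[Lemma 8]{AS2009} for the irrationality of $\theta/\pi$, while you attempt to prove it from the ordinary hypothesis.

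That added step, however, has a gap. You claim $\mathbb{Q}(\zeta_m)\subseteq\mathbb{Q}(\tau)$ and conclude $\varphi(m)\le 2$. But $\zeta_m=\tau/\sqrt{p}$, and $\sqrt{p}\notin\mathbb{Q}(\tau)$: since $|a_1|<2\sqrt{p}$ in the ordinary case, $\mathbb{Q}(\tau)=\mathbb{Q}(\sqrt{a_1^2-4p})$ is an \emph{imaginary} quadratic field, which contains no real irrational like $\sqrt{p}$. All you can conclude is $\mathbb{Q}(\zeta_m)\subseteq\mathbb{Q}(\tau,\sqrt{p})$, a degree-$4$ field, hence only $\varphi(m)\le 4$; this opens up $m\in\{5,8,10,12\}$, which your case analysis does not treat. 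The cleanest repair avoids field theory altogether: if $\theta/\pi\in\mathbb{Q}$ then $2\cos\theta=e^{i\theta}+e^{-i\theta}$ is an algebraic integer, so $(2\cos\theta)^2=a_1^2/p$ is both rational and an algebraic integer, hence a rational integer, forcing $p\mid a_1^2$ and so $p\mid a_1$. For an ordinary curve $p\nmid a_1$, a contradiction. (Alternatively, note $\tau/\bar\tau=e^{2i\theta}$ lies in the imaginary quadratic field $\mathbb{Q}(\tau)$, whose roots of unity have order at most $6$, and run through those few cases.) One further small point: Weyl's criterion as you invoke it gives equidistribution of $n\theta/(2\pi)\bmod 1$ from the irrationality of $\theta/(2\pi)$; the irrationality of $\theta/\pi$ is equivalent, so this is fine, but it is worth stating which normalization you are using since the paper works modulo $\pi$.
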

			
\begin{proof}[\textbf{Proof}] The angle $\theta\ne0$ is an irrational number, see \cite[p.\;165]{PS1964}, \cite[Lemma 8]{AS2009}, and the sequence of real numbers $\{\theta n \mod \pi :n\geq1\}$ is uniformly distributed on the interval $[0,\pi]$, see \cite[Example 2.1]{KN1974} for similar detail. Now, the 1-to-1 and continuous map
\begin{align}\label{eq9797.140}
	[0,\pi]\quad &\longrightarrow \quad [-1,1]\\
s\quad &\longrightarrow \quad  -\cos  s \nonumber
\end{align}				
is \textit{dense invariant} but not \textit{uniform invariant}. It maps a dense set onto a dense set of real numbers, but it does not map an equidistributed set onto an equidistributed set of real numbers since $-\cos  s\ne as+b$ is not a linear map. These properties follow from the derived probability density function

\begin{eqnarray}\label{eq9797.150}
f(t)&=&p(t)\left | \frac{d}{dt}\cos^{-1}t \right |	\\
&=&\frac{1}{\pi}\frac{1}{\sqrt{1-t^2}},	\nonumber
\end{eqnarray}			
where $p(s)=1/\pi$ is the probability density function of the uniform random variable $s\in [0,\pi]$, and $f(t)$ is the probability density function of the random variable $-\cos s=t\in [-1,1]$. Setting $t=\alpha_n$, it readily follows that the sequence $\{\alpha_n: n\geq 1\}$ is dense on the interval $[-1,1]$ but not equidistributed. 
\end{proof}

The first proof of Lemma \ref{lem9797.130} based on the Weyl's criterion seems to be that given in \cite[p.\;166]{PS1964}, and repeated in \cite{DR2008}, \cite{SD2016} and similar references. The last part of the proof shows that the sequence is not equidistributed in the following way. Let $u_n=\alpha_n=-\cos \theta n$ be the sequence of traces of Frobenius. Applying the Weyl's criterion, \cite[Theorem 2.1]{KN1974}, yields	

\begin{eqnarray}\label{eq9797.160} 
\lim_{x\to \infty} \frac{1}{x}\sum_{n\leq x}e^{i 2 \pi  ku_ n}
&=&\lim_{x\to \infty} \frac{1}{x}\sum_{n\leq x}e^{-i 2 \pi  k\cos \theta n}\\
&=&\int_0^1e^{-i 2 \pi  k\cos 2\pi t}\nonumber\\
&=&J_0\left(2\pi k \right)\nonumber\\
&\ne&0,\nonumber
\end{eqnarray} 
where $k\ne0$ is an integer and 
\begin{equation}\label{eq9797.170} 
J_0(z)=\frac{1}{2\pi}\frac{1}{z^{1/2}}+O\left( \frac{1}{z^{3/2}}\right) >0
\end{equation}
is a Bessel function, see \cite[Lemma 2.2.]{DR2008}. Therefore, the sequence of traces of Frobenius $\{\alpha_n:n\geq 1\}$ is not equidistributed on the interval $[-1,1]$.\\

The proofs given in \cite{DR2008}, \cite{SD2016}, etc, generalizes to curves of genus $g\geq1$ in a very nice way. Furthermore, the derivations of the probability density functions, which are discontinuous, seems to explain the spikes shown on the experimental graphs obtained in \cite{AS2023}, \cite{FS2014}, etc. Another completely different generalization of the same result appears in \cite{AS2009}.

\begin{exa}{\normalfont Let $E:y^2=x^3+x+1$ over the finite field $\F_{13}$. The trace of Frobenius is
		\begin{equation}\label{eq9797.180a} 
			\alpha_1=p+1-\#E\left(  \F_p	\right) =\sum_{1\leq n\leq 13}\left( \frac{n^3+n+1}{13}\right)=4, 
		\end{equation}		
		and the angle 
		\begin{equation}\label{eq9797.180b} 
			\theta_0=\arccos \left( \frac{4}{2\sqrt{13}}	\right)= 
			0.9827937232473290679857106110146660144\ldots.
		\end{equation}
		is irrational, \cite[Lemma 8]{AS2009}. The sequence of real numbers
		\begin{eqnarray}\label{eq9797.180c} 
			\alpha_n&=&\tau^n+\overline{\tau}^n\\
			&=&\frac{\sqrt{13^n}e^{i\theta_0n}+\sqrt{13^n}e^{-i\theta_0n}}{2\sqrt{13^n}}\nonumber\\
			&=&-\cos \theta_0 n	=\cos ( 
			0.9827937232473290679857106110146660144\ldots) n\nonumber
		\end{eqnarray}
		is dense but not equidistributed on the interval $[-1,1]$ as $n\to \infty$. Here, the number $\tau^n$ and its conjugate $\overline{\tau}^n$ are roots of the $L$-polynomial $L(E,T)=p^nT^2-\alpha_nT+1$.
	}
\end{exa}

\section{Summatory Functions of Traces of Frobenius}\label{S9790}
\begin{cor} \label{cor9797.200}For any integer $k\ne0$ the summatory function
	\begin{equation}\label{eq9797.175a} 
		\sum_{n\leq x}e^{-i 2 \pi  k\cos \theta n}=	J_0(2\pi k)x+o(x). 
	\end{equation}
\end{cor}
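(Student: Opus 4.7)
The plan is to apply Weyl's equidistribution theorem to a specific continuous test function and then identify the resulting integral with a Bessel function.

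First I would observe that since $\theta/\pi$ is irrational, the proof of Lemma \ref{lem9797.130} (specifically the invocation of \cite[Example 2.1]{KN1974}) gives that the sequence $\{\theta n \bmod \pi : n \geq 1\}$ is uniformly distributed on $[0,\pi]$. By the standard equivalent formulation of uniform distribution, for any Riemann-integrable function $g:[0,\pi]\to \C$,
\begin{equation*}
\frac{1}{x}\sum_{n \leq x} g(\theta n) = \frac{1}{\pi}\int_0^\pi g(s)\,ds + o(1) \qquad (x \to \infty).
\end{equation*}
I would apply this with $g(s) = e^{-i 2\pi k \cos s}$, which is continuous and hence Riemann-integrable.

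Second, I would identify the resulting integral with the Bessel function $J_0(2\pi k)$. Using the classical integral representation
\begin{equation*}
J_0(z) = \frac{1}{\pi}\int_0^\pi \cos(z\cos s)\,ds,
\end{equation*}
together with the symmetry $\int_0^\pi \sin(z\cos s)\,ds = 0$ (via $s \mapsto \pi - s$), one has
\begin{equation*}
\frac{1}{\pi}\int_0^\pi e^{-i 2\pi k \cos s}\,ds = J_0(-2\pi k) = J_0(2\pi k),
\end{equation*}
since $J_0$ is an even function. Combining this with the previous display and multiplying through by $x$ yields
\begin{equation*}
\sum_{n\leq x} e^{-i 2\pi k \cos \theta n} = J_0(2\pi k)\, x + o(x),
\end{equation*}
which is exactly the claim. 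This is consistent with the Weyl-sum calculation already recorded in \eqref{eq9797.160}, where the Cesaro limit equals $J_0(2\pi k)$.

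There is no serious obstacle: the statement is essentially a restatement of the equidistribution of $\{\theta n \bmod \pi\}$ tested against the smooth function $e^{-i2\pi k \cos s}$. The only minor technical point is justifying the Bessel-function identification, but this is a standard Fourier-analytic computation and requires no new input beyond what is already cited in the paper.
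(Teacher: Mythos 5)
Your proof is correct and takes essentially the same route the paper implicitly takes: the corollary is a restatement of the Weyl-criterion computation in \eqref{eq9797.160}, obtained by testing the equidistribution of $\{\theta n\bmod\pi\}$ against $e^{-i2\pi k\cos s}$ and identifying the resulting integral with $J_0(2\pi k)$. You merely supply the details (the classical integral representation of $J_0$ and the $s\mapsto\pi-s$ symmetry killing the imaginary part) that the paper leaves to the reader.
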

This result complements the related twisted exponential sum
\begin{equation}\label{eq9797.175b} 
	\sum_{n\leq x}\mu(n) \cos \theta n=o(x) ,
\end{equation}
where $\mu$ is the Mobius function, proved in \cite[Theorem 1.1]{SS2019}.\\

\section{Discrepancy of the Sequence of Traces }\label{S9793}
The discrepancy of a sequence of real numbers $\{x_n:n\geq1\}$ on the unit interval $[0,1]$ is defined by the inequality
\begin{equation}\label{eq9793.175a}
	D_N(x_1,x_2,\ldots,x_N)=\sup_{0\leq \alpha\leq 1}\left |\#\{n\leq N: x_n\in[0,\alpha]-\alpha N\}\right |.
\end{equation}
The discrepancy is essentially a measure of uniformity and distribution of the sequences on the unit interval $[0,1]$. The discrepancies of uniformly distributed and nearly uniformly distributed sequences are in the range 
\begin{equation}\label{eq9793.175b}
\frac{c_0}{N}\leq 	D_N(x_1,x_2,\ldots,x_N)\leq 
\frac{c_1}{f(N)},
\end{equation}
where $f(n)$ is an unbounded function and
$c_0,c_1>0$ are constants, see \cite [Corollary 1.1]{KN1974}. The upper bound of the discrepancy is computed via the following analytic tool. 

 \begin{thm} \label{thm9793.175} {\normalfont (Erdos-Turan inequality)} If $\{x_n:n\geq1\}$ on the unit interval $[0,1]$, then for any
		$$D_N\leq 5\left(\frac{1}{x+1}+\frac{1}{N}\sum_{k\leq x}\frac{1}{k}\bigg |\sum_{n\leq N}e^{i2\pi k x_n}\bigg| \right) $$
		for any large real numbers $N$ and $x$.
		
	\end{thm}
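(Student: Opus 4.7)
The plan is to estimate the discrepancy by sandwiching the indicator function $\chi_{[0,\alpha]}$ between two trigonometric polynomials of bounded degree $H$ (with $H$ playing the role of the parameter $x$ in the statement), and then to evaluate those polynomials along the sequence $\{x_n\}$ so that the exponential sums appearing on the right hand side take over.

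First I would construct trigonometric polynomials $T^{+}_{H}$ and $T^{-}_{H}$ of degree at most $H$ satisfying
\begin{equation*}
T^{-}_{H}(t)\leq \chi_{[0,\alpha]}(t)\leq T^{+}_{H}(t)\qquad (t\in[0,1]),
\end{equation*}
together with the $L^{1}$ estimate $\int_{0}^{1}(T^{+}_{H}-T^{-}_{H})\,dt\ll 1/H$ and the Fourier coefficient bound $|\widehat{T^{\pm}_{H}}(k)|\ll 1/|k|$ for $k\ne 0$. The standard recipe is to combine the Beurling--Selberg extremal majorant and minorant of the signum function with the Fej\'er kernel; this forces the zero-th Fourier coefficient of $T^{\pm}_{H}$ to equal $\alpha\pm O(1/H)$, which is exactly what is needed in order to compare against $\alpha N$ later.

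Next I would insert the points $x_{n}$ into these polynomials and use the sandwich property. Summing the inequality over $n\leq N$ and expanding $T^{\pm}_{H}(t)=\sum_{|k|\leq H}\widehat{T^{\pm}_{H}}(k)e^{2\pi ikt}$, the $k=0$ term contributes $(\alpha\pm O(1/H))N$, while the triangle inequality applied to the nonzero frequencies yields
\begin{equation*}
\bigl|\#\{n\leq N:x_{n}\in[0,\alpha]\}-\alpha N\bigr|\ll \frac{N}{H}+\sum_{1\leq k\leq H}\frac{1}{k}\biggl|\sum_{n\leq N}e^{2\pi ikx_{n}}\biggr|.
\end{equation*}
Dividing by $N$, taking the supremum over $\alpha\in[0,1]$, and renaming $H$ as $x$ produces the stated inequality once the numerical constants are tracked sufficiently tightly to collect them into the factor $5$.

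The main obstacle is the construction of the extremal polynomials $T^{\pm}_{H}$ having simultaneously an $L^{1}$-gap of size $O(1/H)$ and Fourier coefficients decaying like $1/|k|$; this Beurling--Selberg step is the technical heart of the Erd\H{o}s--Tur\'an argument. Everything afterwards is essentially Fourier expansion and the triangle inequality, and the only other delicate point is the numerical bookkeeping required to reach the explicit constant $5$ rather than an unspecified $O(1)$.
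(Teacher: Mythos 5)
Your outline is correct and follows the modern Beurling--Selberg/Vaaler route to the Erd\H{o}s--Tur\'an inequality. Note, though, that the paper itself does not actually prove this theorem: its ``proof'' is only an (incomplete) citation to Kuipers--Niederreiter, Theorem~2.5, so there is no internal argument to compare against. The proof in that cited reference realizes the same high-level sandwiching strategy you describe---bound $\chi_{[0,\alpha]}$ above and below by trigonometric polynomials of degree at most $H$ having small $L^{1}$-gap and controlled Fourier coefficients, then Fourier-expand, apply the triangle inequality, and take the supremum over $\alpha$---but it constructs the majorant and minorant directly from a Fej\'er-type kernel in the spirit of the original 1948 Erd\H{o}s--Tur\'an argument, rather than from the extremal Beurling--Selberg functions you invoke. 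The two constructions are interchangeable at the level of the statement at hand; the Beurling--Selberg/Vaaler version is extremal and gives sharper numerical constants, at the price of a more delicate construction of $T^{\pm}_{H}$, while the Fej\'er-kernel version is more elementary but gives a worse constant (which is why the literature contains variants with $3$, $4$, $5$, or $6$ in place of your factor $5$). You correctly isolate the two places where care is needed---the construction of $T^{\pm}_{H}$ with simultaneous $L^{1}$-gap $O(1/H)$ and Fourier decay $O(1/|k|)$, and the bookkeeping to land on the explicit constant $5$ rather than an unspecified $O(1)$---and the remaining steps (summing over $n$, isolating the $k=0$ term, dividing by $N$, and taking the supremum over $\alpha$) are routine as you present them.
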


\begin{proof}[\textbf{Proof}] A complete proof appears in \cite[Theorem 2.5]{KN1974} or , 
\end{proof}
  
The discrepancy test is a limit test applied to the Erdos-Turan inequality.
\begin{thm} \label{thm9793.200} If $\{x_n:n\geq 1\}$ is a sequence of real numbers on the unit interval, the following statements are equivalent.
	\begin{enumerate} [font=\normalfont, label=(\roman*)]
	\item The sequence $\{x_n:n\geq 1\}$ is uniformly distributed on the unit interval $[0,1].$ 
	
	\item The limit $\displaystyle \lim_{N\to\infty}D_N(x_1,x_2,\ldots,x_N)=0.$ 
\end{enumerate}
\end{thm}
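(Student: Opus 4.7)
The plan is to handle the two implications separately, in each case working directly with the counting function
\[
A(\alpha,N):=\#\{n\leq N:x_n\in[0,\alpha]\},
\]
so that the (normalized) discrepancy takes the form $D_N=\sup_{\alpha\in[0,1]}|A(\alpha,N)/N-\alpha|$. The whole argument is an elementary real-analytic exercise; the only genuine ingredient is the monotonicity of $A(\cdot,N)$ in its first argument.

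For the direction $(ii)\Rightarrow(i)$, which I would dispatch first, the hypothesis $D_N\to 0$ yields the pointwise bound $|A(\alpha,N)/N-\alpha|\leq D_N\to 0$ for every fixed $\alpha\in[0,1]$. Taking differences $A([a,b],N)=A(b,N)-A(a,N)$ for an arbitrary subinterval $[a,b]\subset[0,1]$ then gives $A([a,b],N)/N\to b-a$, which is one of the standard equivalent formulations of uniform distribution on $[0,1]$.

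For the direction $(i)\Rightarrow(ii)$, the hypothesis supplies only pointwise convergence $A(\alpha,N)/N\to\alpha$ at each fixed $\alpha$, and the task is to upgrade this to uniformity in $\alpha$, which is precisely $D_N\to 0$. I would invoke the usual partition trick: fix $\varepsilon>0$, choose mesh points $0=\alpha_0<\alpha_1<\cdots<\alpha_m=1$ with $\alpha_{j+1}-\alpha_j<\varepsilon$, and apply the pointwise convergence at the finite set $\{\alpha_0,\ldots,\alpha_m\}$ to find some $N_0$ with $|A(\alpha_j,N)/N-\alpha_j|<\varepsilon$ for all $j$ and all $N\geq N_0$. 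For an arbitrary $\alpha\in[\alpha_j,\alpha_{j+1}]$, the monotonicity of $A(\cdot,N)$ gives the sandwich $A(\alpha_j,N)\leq A(\alpha,N)\leq A(\alpha_{j+1},N)$, which together with $\alpha_j\leq\alpha\leq\alpha_{j+1}$ delivers $|A(\alpha,N)/N-\alpha|<2\varepsilon$ uniformly in $\alpha$, and hence $D_N\leq 2\varepsilon$ for $N\geq N_0$.

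\textbf{Anticipated obstacle.} The statement is a textbook equivalence (compare \cite[Theorem 1.1]{KN1974}), so no essential difficulty is expected. The one conceptual point worth flagging is that raw pointwise convergence at every $\alpha$ does not automatically force $D_N\to 0$: the passage from pointwise to uniform convergence must go through the monotonicity sandwich, which is what rules out pathological sequences whose worst-$\alpha$ error could persist even though each individual fibre behaves.
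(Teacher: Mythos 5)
Your argument is correct and gives a genuinely different route from the paper, whose ``proof'' of Theorem~\ref{thm9793.200} is only a pair of citations: a measure-theoretic argument in \cite[Theorem 1.6]{DT1997} and a proof via the Erd\H{o}s--Tur\'{a}n inequality in \cite[Theorem 4.1.14]{NW2015}. What you give instead is the elementary, self-contained argument one finds in Kuipers--Niederreiter: the direction $(ii)\Rightarrow(i)$ falls out of the definition of the supremum, and the nontrivial direction $(i)\Rightarrow(ii)$ upgrades pointwise convergence of $A(\alpha,N)/N$ to uniform convergence by the partition-and-sandwich trick, using only the monotonicity of $\alpha\mapsto A(\alpha,N)$. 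This is strictly more elementary than either route the paper defers to---no measure theory, no Weyl sums, no Erd\H{o}s--Tur\'{a}n---and it makes transparent exactly where the uniformity comes from, which is precisely the conceptual point you flag at the end. The one thing the Erd\H{o}s--Tur\'{a}n approach buys that yours does not is a \emph{quantitative} relation between $D_N$ and the Weyl sums; the paper needs that in Corollary~\ref{cor9793.200}, but not for the bare equivalence here. A minor loose end in your $(ii)\Rightarrow(i)$: $A(b,N)-A(a,N)$ counts the half-open interval $(a,b]$, not $[a,b]$, though this is harmless for the asymptotic density and standard to gloss over.
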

\begin{proof}[\textbf{Proof}] A long proof based on measure theory is given in \cite[Theorem 1.6]{DT1997} and a short proof based on the Erdos-Turan inequality is given in \cite[Theorem 4.1.14]{NW2015}.
\end{proof}

This section concludes with the calculation of the discrepancy of the sequence of traces and an open problem.
\begin{cor} \label{cor9793.200} The discrepancy of the sequence of normalized traces of Frobenius $\{\alpha_n:n\geq1\}$ satisfies the inequality
	\begin{equation}\label{eq9793.175ca} 
D_N(\alpha_1,\alpha_2,\ldots,\alpha_N)\leq  \frac{c_1}{\sqrt{N}},
\end{equation}
where $c_1>0$ is a constant.
\end{cor}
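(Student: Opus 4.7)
The plan is to apply the Erdos-Turan inequality (Theorem~\ref{thm9793.175}) to the sequence $\{\alpha_n\}$ and feed in the Weyl-sum evaluation already established in Corollary~\ref{cor9797.200}.

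First I would affinely rescale $\{\alpha_n\}\subset[-1,1]$ to $y_n=(1+\alpha_n)/2\in[0,1]$, on which the discrepancy of Theorem~\ref{thm9793.175} is defined; the map is a continuous bijection, so the two discrepancies differ by at most an absolute factor. Next, for each nonzero integer $k$ the associated exponential sum is
$$\sum_{n\leq N} e^{2\pi i k y_n} \;=\; e^{\pi i k}\sum_{n\leq N} e^{-\pi i k \cos(\theta n)}\;=\; J_0(\pi k)\,N + o(N),$$
by the same Weyl-criterion argument used to prove Corollary~\ref{cor9797.200}. Combined with the Bessel asymptotic~\eqref{eq9797.170}, this gives the pointwise bound $\bigl|\sum_{n\leq N} e^{2\pi i k y_n}\bigr|\leq c_0\, N k^{-1/2} + o(N)$.

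Inserting this into Theorem~\ref{thm9793.175} and using the convergence of $\sum_{k\geq 1} k^{-3/2}$, the Bessel main term contributes only a bounded amount to $\tfrac{1}{N}\sum_{k\leq x}\tfrac{1}{k}\bigl|\sum_{n\leq N}e^{2\pi i k y_n}\bigr|$, while the $o(N)$ remainder is negligible after dividing by $N$. Balancing the cutoff against $1/(x+1)$ with the natural choice $x\asymp\sqrt{N}$ should then produce the asserted bound $D_N\leq c_1/\sqrt{N}$.

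The hard part is that the Bessel main term $J_0(\pi k)N$ is precisely the obstruction to equidistribution witnessed by Lemma~\ref{lem9797.130}, so a literal application of Erdos-Turan against Lebesgue measure on $[0,1]$ cannot beat a fixed constant. To actually extract the decaying rate one has to reinterpret the discrepancy with respect to the Sato-Tate density $f(t)=\pi^{-1}(1-t^2)^{-1/2}$ and reduce, via the change of variables $\alpha_n=-\cos(n\theta)$, to the linear rotation $\{n\theta/\pi\bmod 1\}$: the geometric-sum bound $\bigl|\sum_{n\leq N}e^{2\pi i k n\theta/\pi}\bigr|\leq|\sin(k\theta)|^{-1}$ together with a Diophantine lower bound $|\sin(k\theta)|\gg 1/k$ for $k\leq\sqrt{N}$ yields exactly the $N^{-1/2}$ rate after the optimization $x\asymp\sqrt{N}$. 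The implicit assumption that such a Diophantine bound on $\theta$ is available (i.e.\ that the Sato-Tate angle of the curve is of constant type) is, in effect, the hidden hypothesis behind Corollary~\ref{cor9793.200}.
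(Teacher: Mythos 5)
You have correctly located the obstruction, and in doing so you have uncovered a genuine error in Corollary~\ref{cor9793.200} as stated. Because the Weyl sums satisfy $\sum_{n\leq N}e^{2\pi i k\alpha_n}=J_0(2\pi k)\,N+o(N)$ with $J_0(2\pi k)\neq 0$ for nonzero integers $k$, the quantity
$$\frac{1}{N}\sum_{k\leq x}\frac{1}{k}\left|\sum_{n\leq N}e^{2\pi i k\alpha_n}\right|=\sum_{k\leq x}\frac{|J_0(2\pi k)|}{k}+o(1)$$
is bounded below by a fixed positive constant (already by the $k=1$ term), independently of $N$ and of the cutoff $x$. The Erdos--Turan right-hand side therefore cannot tend to zero, and indeed must not: by Lemma~\ref{lem9797.130} the sequence $\{\alpha_n\}$ is not equidistributed, so Theorem~\ref{thm9793.200} forces $D_N\not\to 0$. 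The bound $D_N\leq c_1/\sqrt{N}$ is thus incompatible with the rest of the paper, and your observation that a ``literal application of Erdos--Turan \ldots cannot beat a fixed constant'' is exactly right.

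The paper's own proof commits precisely the slip you anticipated. After inserting the Bessel asymptotic it reaches, up to the factor $5$, the quantity
$$\frac{1}{x+1}+\sum_{k\leq x}\frac{1}{k}\left(\frac{1}{2\pi}\frac{1}{(2\pi k)^{1/2}}+O\!\left(\frac{1}{(2\pi k)^{3/2}}\right)\right),$$
and then replaces the partial sum $\frac{1}{(2\pi)^{3/2}}\sum_{k\leq x}k^{-3/2}$, which converges to the constant $\zeta(3/2)/(2\pi)^{3/2}$ as $x\to\infty$, by $\frac{1}{(2\pi)^{3/2}N^{1/2}}$ -- in effect trading the summation index $k$ for $N$. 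Setting $x=N$ then appears to yield $c_1/\sqrt{N}$, but the intermediate inequality is simply false, and no choice of $x$ repairs it.

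Two remarks on your write-up. First, your opening paragraph is internally inconsistent: after correctly noting that the Bessel main term contributes a bounded, nondecaying amount, you still conclude that $x\asymp\sqrt{N}$ ``should then produce'' $c_1/\sqrt{N}$; it does not, and you correct yourself only in the second paragraph. Second, your proposed repair -- measure discrepancy against the arcsine (Sato--Tate) density, pull back through $\alpha_n=-\cos n\theta$ to the linear sequence $\{n\theta/\pi\bmod 1\}$, bound the resulting geometric sums by $|\sin(k\theta)|^{-1}$, and invoke a Diophantine lower bound $|\sin(k\theta)|\gg 1/k$ -- does give an $O(N^{-1/2})$ rate, but for a weighted discrepancy, and only under an unproved constant-type hypothesis on the Frobenius angle. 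That is a sensible and salvageable statement, but it is not the one Corollary~\ref{cor9793.200} makes, which concerns the ordinary Lebesgue discrepancy and cannot hold. The ``Open Problem'' posed at the end of Section~\ref{S9793} is an artifact of this internal inconsistency, and you have correctly diagnosed its source.
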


\begin{proof}[\textbf{Proof}] Using the asymptotic form of the Bessel function in \eqref{eq9797.170} and plugging the value given into \eqref{eq9797.160} yield
\begin{equation}\label{eq9793.200b} 
	J_0(2\pi k)=\frac{1}{2\pi}\frac{1}{(2\pi k)^{1/2}}+O\left( \frac{1}{(2\pi k)^{3/2}}\right) >0.
\end{equation}	Replacing this into the Erdos-Turan inequality yields
\begin{eqnarray}\label{eq9793.200c}
D_N(\alpha_1,\alpha_2,\ldots,\alpha_N)&\leq&5\left(\frac{1}{x+1}+\frac{1}{N}\sum_{k\leq x}\frac{1}{k}\bigg |\sum_{n\leq N}e^{i2\pi k \alpha_n}\bigg| \right)\\
	&\leq&	 5\left(\frac{1}{x+1}+\sum_{k\leq x}\frac{1}{k}\left( \frac{1}{2\pi}\frac{1}{(2\pi k)^{1/2}}+O\left( \frac{1}{(2\pi k)^{3/2}}\right)\right)\right)   \nonumber \\	
	&\leq& 5\left(\frac{1}{x+1}+\frac{1}{(2\pi)^{3/2}N^{1/2}}+O\left( \frac{1}{(2\pi N)^{3/2}}\right)\right) \nonumber.	
\end{eqnarray}
Setting $x=N$ completes the verification.
\end{proof}

The discrepancy of sequences traces of Frobenius, and other sequences of real numbers with similar properties as the sequences of powers of Salem numbers satisfy the limit condition
\begin{equation}\label{eq9793.215a}
	\lim_{N\to\infty}D_N(x_1,x_2,\ldots,x_N)=0.
\end{equation}
However, these sequences are dense but not uniformly distributed on the unit interval $[0,1]$. This observation 
prompts an important question about the proof of the discrepancy test described in Theorem \ref{thm9793.200}.\\

\begin{openp}\label{openp9793.200}{\normalfont If the proof of Theorem \ref{thm9793.200} is complete, it has no gap, then how can it be used to distinguish between 
\begin{enumerate}
\item A dense sequence of real numbers $\{u_n:n\geq 1\}.$
\item An uniformly distributed sequence of real numbers $\{u_n:n\geq 1\}.$\\

Is it possible that there is a threshold of the form
 	\item A dense sequence of real numbers $\{u_n:n\geq 1\}$ if and only if \begin{equation}\label{eq9793.220a}
 D_N(x_1,x_2,\ldots,x_N)=O\left( \frac{\sqrt{\log \log N}}{N^{1/2}}\right) ,
 	\end{equation}
 	see \cite[Theorem 1.93]{DT1997} for details on the upper bound in \eqref{eq9793.220a}.
 	\item An uniformly distributed sequence of real numbers $\{u_n:n\geq 1\}$ if and only if 
 \begin{equation}\label{eq9793.220b}D_N(x_1,x_2,\ldots,x_N)=O\left(  \frac{1}{N^{1-\varepsilon}}\right) , 	
 \end{equation}
 where $\varepsilon>0$ is a small number, this the discrespancy of algebraic irrational numbers, see \cite[Example 3.1]{KN1974} for detailed proof.
 \end{enumerate}
}
\end{openp}

\section{Salem Numbers and the Density Functions }\label{S9795}
The powers sequences of a few collections of polynomials have the same distributions as the sequences of traces of Frobenius. For example, the powers sequence generated by the roots of the shifted $5th$ cyclotomic polynomial $\Phi_5(T)-2=T^4+T^3+T^2+T-1$. The best known collection in the literature is the set of Salem polynomials.
\begin{dfn}\label{dfn9795.700}{\normalfont
A Salem number is an algebraic integer $\tau>1$ of degree $d=2e\geq4$ such that the conjugates $\alpha_1,\;\alpha_2,\;\ldots,\; \alpha_{d-1},$ have their moduli $|\alpha_i|\leq  1$.}
\end{dfn}

\begin{thm}\label{thm9795.700} {\normalfont (\cite{PS1964})} Let $\tau$ be a Salem number. Then,
	\begin{enumerate} [font=\normalfont, label=(\roman*)]
	\item The powers $\tau^n$ reduced modulo $1$ are everywhere dense in $[0,1]$ as $n\to\infty$.
	\item The powers $\tau^n$ are not uniformly distributed modulo $1$ in $[0,1]$ as $n\to\infty$.
\end{enumerate}
\end{thm}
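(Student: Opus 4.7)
The plan is to mimic the argument of Lemma \ref{lem9797.130} but now in a higher-dimensional torus, exploiting the algebraic structure of the conjugates of $\tau$. Since $\tau$ has degree $d=2e$ with reciprocal pair $\tau, 1/\tau$ and $e-1$ pairs of unimodular conjugates $\alpha_j=e^{i\theta_j},\overline{\alpha}_j=e^{-i\theta_j}$ for $j=1,\ldots,e-1$, the power sum
\begin{equation*}
s_n \;=\; \tau^n+\tau^{-n}+\sum_{j=1}^{e-1}\bigl(\alpha_j^n+\overline{\alpha}_j^n\bigr) \;\in\; \Z
\end{equation*}
is a rational integer (it is the trace of $\tau^n$ in $\Q(\tau)$). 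Reducing modulo one gives the key identity
\begin{equation*}
\tau^n \;\equiv\; -\tau^{-n}-2\sum_{j=1}^{e-1}\cos(n\theta_j)\pmod{1},
\end{equation*}
and since $|\tau^{-n}|\to 0$ exponentially, the asymptotic distribution of $\{\tau^n\}$ on $[0,1]$ is the same as that of $-2\sum_{j=1}^{e-1}\cos(n\theta_j)\bmod 1$.

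For part (i), I would first show that no $\theta_j$ is a rational multiple of $\pi$: otherwise $\alpha_j$ would be a root of unity, contradicting the irreducibility of the minimal polynomial of $\tau$. Then, by the Kronecker--Weyl density theorem applied to the torus $\mathbb{T}^{e-1}$, the orbit $(n\theta_1,\ldots,n\theta_{e-1})$ is dense modulo $2\pi$. Composing with the continuous surjection $(t_1,\ldots,t_{e-1})\mapsto -2\sum_j\cos t_j$ onto $[-2(e-1),2(e-1)]$ (an interval of length at least $4$), one obtains density of $\{\tau^n\}$ in $[0,1]$.

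For part (ii), I would apply Weyl's criterion exactly as in equation \eqref{eq9797.160}. For a nonzero integer $k$, using the approximation $\tau^n\equiv-2\sum_j\cos(n\theta_j)\pmod 1$ and joint equidistribution of $(n\theta_1,\ldots,n\theta_{e-1})$ on $\mathbb{T}^{e-1}$ (assuming $\Q$-linear independence of $1,\theta_1/\pi,\ldots,\theta_{e-1}/\pi$), the exponential average converges to
\begin{equation*}
\lim_{N\to\infty}\frac{1}{N}\sum_{n\leq N}e^{2\pi i k\tau^n} \;=\; \prod_{j=1}^{e-1}\int_{0}^{1}e^{-4\pi i k\cos(2\pi t)}\,dt \;=\; \prod_{j=1}^{e-1}J_0(4\pi k).
\end{equation*}
By the asymptotic expansion in \eqref{eq9797.170}, $J_0(4\pi k)\neq 0$ for all sufficiently large $k$, so the Weyl limit is nonzero and the sequence fails to be uniformly distributed modulo $1$.

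The main obstacle is securing the joint equidistribution on $\mathbb{T}^{e-1}$, i.e., the $\Q$-linear independence of the system $1,\theta_1/\pi,\ldots,\theta_{e-1}/\pi$. For $e=2$ (degree $4$ Salem numbers, the case most relevant to the comparison with Frobenius traces in Section \ref{S9795}) this reduces to the single irrationality $\theta_1/\pi\notin\Q$, handled exactly as in Lemma \ref{lem9797.130}. For general $e\geq 2$ one must rule out nontrivial multiplicative relations among the unimodular conjugates of $\tau$; this is the delicate step carried out via the reciprocal structure of the Salem polynomial and Galois-theoretic considerations on units of $\Z[\tau,\tau^{-1}]$, and is the content of the classical treatment in \cite{PS1964}.
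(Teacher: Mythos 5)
The paper itself offers no proof of Theorem \ref{thm9795.700}; it simply cites \cite{PS1964}. So your reconstruction is being measured against the classical Pisot--Salem argument, and your outline is indeed that argument: pass to the integer power sums $s_n$, reduce modulo $1$, discard the exponentially small $\tau^{-n}$, and push the torus dynamics of $(n\theta_1,\ldots,n\theta_{e-1})$ through the cosine-sum map.

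There is, however, a genuine gap exactly at the point you yourself flag, and it cannot be deferred. Density in $\mathbb{T}^{e-1}$ by Kronecker and joint equidistribution by Weyl both require that $1,\theta_1/\pi,\ldots,\theta_{e-1}/\pi$ be $\Q$-linearly independent; your argument only checks that each $\theta_j/\pi$ is individually irrational, which is strictly weaker. If some nontrivial relation $\sum_j k_j\theta_j\in\pi\Z$ held, the orbit $(n\theta_1,\ldots,n\theta_{e-1})$ would be confined to a proper subtorus, the image of the cosine-sum map restricted to that subtorus could fail to have length $>1$ (so (i) would not follow), and the Weyl limit in (ii) would not factor as $\prod_j J_0(4\pi k)$. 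Ruling out such relations via the reciprocal and Galois structure of the Salem polynomial --- a nontrivial multiplicative relation among the unimodular conjugates $\alpha_j$, transported by a Galois automorphism taking some $\alpha_j$ to $\tau$, would force $\tau$ or $1/\tau$ onto the unit circle --- is precisely the lemma Pisot and Salem prove and is the technical heart of both (i) and (ii). Without it the argument is incomplete at its crux, not merely at a side step.

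A secondary caution: justifying $J_0(4\pi k)\neq 0$ for all large $k$ by \eqref{eq9797.170} inherits that equation's error, since the true asymptotic $J_0(z)=\sqrt{2/(\pi z)}\cos(z-\pi/4)+O(z^{-3/2})$ oscillates in sign and has infinitely many zeros. This does not sink the proof, because Weyl's criterion needs only a single $k\neq 0$ with nonvanishing limit (and, for instance, $J_0(4\pi)\neq 0$), but the reasoning as written should not lean on \eqref{eq9797.170}.
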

The exact probability densities functions of the sequence of powers $\{\tau^n:n\geq1\}$ of a Salem numbers $\tau$ of degree $d=2e\geq4$ are derived in \cite{SD2016}. Here a simple approximation is described. This approximation is accomplished by the arcsin probability density function
\begin{equation}\label{eq9795.500}
f_d(z)=\frac{1}{2(d-1)\sin^{-1}(1/(d-1))}\frac{1}{\sqrt{1-(z/(d-1))^2}}
\end{equation}
where $t\in [-1,1]$. The sequence of approximations $\{f_d(z):d\geq 4\}$ rapidly converges to the uniform density function $f(z)=1/2$ over the interval $[-1,1] $ as $d\to \infty$. The graph for the probability density function
\begin{equation}\label{eq9795.500b}
f_{4}(z)=\frac{1}{\pi}\frac{1}{\sqrt{1-z^2}}
\end{equation}
associated with an irreducible polynomial $p_4(T)\in \Z[T]$ of degree $d=4$, is displayed in Figure \ref{f9795.100a}. Likewise, the graph for the probability density function
\begin{equation}\label{eq9795.500c}
	f_{12}(z)=\frac{1}{10\sin^{-1}(1/5)}\frac{1}{\sqrt{1-(z/5)^2}}
\end{equation}
associated with an irreducible polynomial $p_{12}(T)\in \Z[T]$ of degree $d=12$, is displayed in Figure \ref{f9795.100b}.

\begin{figure}[H]
\setlength{\tabcolsep}{0.5cm}
\renewcommand{\arraystretch}{1.250092}
\setlength{\arrayrulewidth}{0.75pt}
\begin{tikzpicture}
	\begin{axis}[	
		xmin = -1.5, xmax = 1.5,
		ymin = 0, ymax = 3.00,
		xtick distance = .5,
		ytick distance = .5,
		grid = both,
		samples=100,
		axis lines=left, xlabel=$z$, ylabel=$f_4(z)$,
		width = 1.0\textwidth,
		height = 0.4\textwidth, very thick,
		grid = major,tick]
\addplot [
domain=-2:2, 
samples=100, 
color=blue,
]
{1/(3.1415*(1-x^2)^.5)};
	\end{axis}
\end{tikzpicture}
\caption{Powers distribution for algebraic Salem number of degree $4$.}
\label{f9795.100a}
\end{figure}
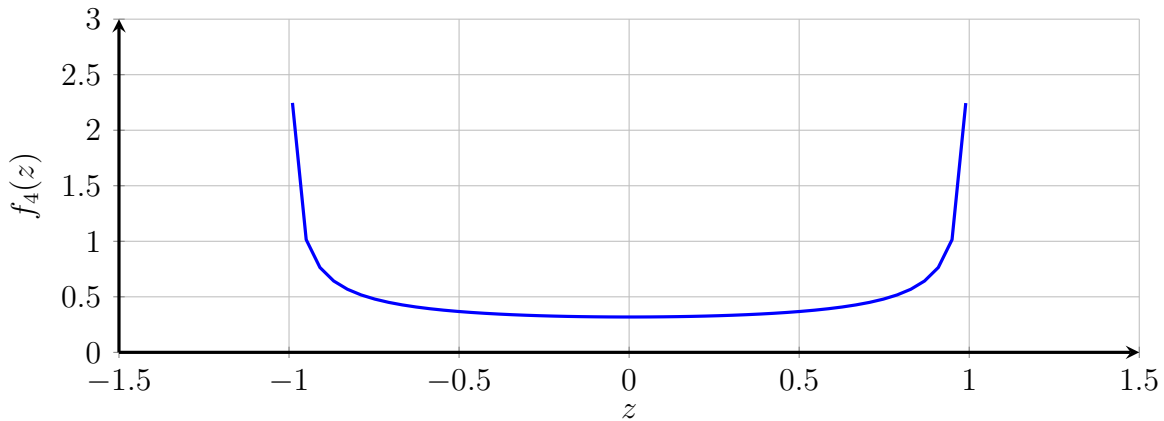

\begin{figure}[H]
	\setlength{\tabcolsep}{0.5cm}
	\renewcommand{\arraystretch}{1.250092}
	\setlength{\arrayrulewidth}{0.75pt}
	\begin{tikzpicture}
		\begin{axis}[	
			xmin = -1.5, xmax = 1.5,
			ymin = 0, ymax = 2.00,
			xtick distance = .5,
			ytick distance = .5,
			grid = both,
			samples=100,
			axis lines=left, xlabel=$z$, ylabel=$f_{12}(z)$,
			width = 1.0\textwidth,
			height = 0.4\textwidth, very thick,
			grid = major,tick]
			\addplot [
			domain=-1:1, 
			samples=100, 
			color=blue,
			]
			{1/(2.0135792079*(1-x^2/25)^.5)};
		\end{axis}
	\end{tikzpicture}
\caption{Powers distribution for algebraic Salem number of degree $12$.}
	\label{f9795.100b}
\end{figure}

\section{Sato-Tate Distribution Function and Beyond}\label{S9799A}
The distribution of the angles of Frobenius as the primes varies follows a circular distribution known as the Sato-Tate distribution 
\begin{equation}\label{eq9799.200}
F(a,b)=\lim_{x\to\infty}\frac{\#\{p\leq x:a\leq \theta_p\leq b\}}{\pi(x)}=\frac{2}{\pi}\int_a^b \sin^2(\theta) d\theta,
\end{equation}
where $a_1=2p^{1/2}\cos \theta_p$ is the normalized trace and $[a,b]\subseteq [0,\pi]$.\\

The result for elliptic curves with complex multiplication was proved over half a century ago. The subset of traces $a_1=0$ forces a discontinuity on the probability density function. On the otherhand, the probability density function for elliptic curves without complex multiplication is continuous.

\begin{thm}\label{thm9799.300A} {\normalfont (Hecke)} Let $E$ be a nonsingular elliptic curve over the rational numbers $\Q$ with complex multiplication and conductor $N\geq1$. Assume $L(T)=T^2-a_1T+p$ is the characteristic polynomial and $\alpha_1=a_1/2p^{1/2}\in [-1,1]$ is the normalized trace. Then, for every interval $[a,b]\subseteq [-1,1]$ such that $0\notin [a,b]$ the cumulative distribution function is the following. 
\begin{enumerate} [font=\normalfont, label=(\roman*)]
	\item $\displaystyle F_E^0(a,b)=\lim_{x\to\infty}\frac{\#\{p\leq x:p\nmid N \text{ and }\alpha_1=0\}}{\pi(x)}=\frac{1}{2}.$ 
\item $\displaystyle F_E^r(a,b)=\lim_{x\to\infty}\frac{\#\{p\leq x:p\nmid N \text{ and }\alpha_1\in [a,b]\}}{\pi(x)}=\frac{1}{2\pi}\int_a^b\frac{1}{\sqrt{1-t^2}}dt$ .
\end{enumerate}	
\end{thm}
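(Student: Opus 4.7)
The plan is to exploit the CM structure of $E$: the endomorphism ring is an order in the ring of integers $\mathcal{O}_K$ of an imaginary quadratic field $K=\Q(\sqrt{-D})$, and Deuring's criterion partitions the primes $p\nmid N$ into two disjoint Chebotarev classes. An inert or ramified prime in $K$ is \emph{supersingular}, so that $a_1=0$ and hence $\alpha_1=0$; a split prime $(p)=\mathfrak{p}\overline{\mathfrak{p}}$ is \emph{ordinary}, and then $a_1=\pi+\overline{\pi}=2\sqrt{p}\cos\theta_p$, where $\pi\in\mathcal{O}_K$ is a chosen generator of $\mathfrak{p}$ with $|\pi|=\sqrt{p}$ and argument $\theta_p\in[0,2\pi)$.

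Part (i) then follows immediately from Chebotarev's density theorem applied to the quadratic extension $K/\Q$: the inert primes have Dirichlet density exactly $1/2$, the ramified primes form a finite set of density zero, and hence the set of primes with $\alpha_1=0$ has density $1/2$.

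For part (ii), the essential input is Hecke's equidistribution theorem for Grossencharacters. Attach to $E$ the Grossencharacter $\psi_E$ of infinity type $(1,0)$, chosen so that $\psi_E(\mathfrak{p})=\pi$ on split primes; the analytic continuation and non-vanishing on the line $\Re(s)=1$ of the $L$-functions $L(s,\psi_E^k)$ for every integer $k\neq 0$ yield, via Weyl's criterion, that the angles $\theta_p$ are uniformly distributed on $[0,2\pi)$ with density $1/(2\pi)$, the averaging being taken over split primes $p\leq x$. Pushing this uniform measure forward through the cosine map $\theta\mapsto\cos\theta=\alpha_1$ produces the arcsine density $1/(\pi\sqrt{1-t^2})$ on $[-1,1]$ \emph{relative to the split primes}, and rescaling by the density $1/2$ of split primes inside all primes delivers the stated density $1/(2\pi\sqrt{1-t^2})$; integrating against the indicator of $[a,b]$ finishes the proof.

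The hard step is Hecke's theorem itself, which depends on the analytic continuation and boundary non-vanishing of an entire family of Hecke $L$-functions, and this is well beyond the elementary arguments available in Lemma \ref{lem9797.130}. In contrast to the single-curve, many-power setting of that lemma, there does not appear to be an elementary substitute in the present many-prime, CM setting, which is why the two results are of very different depth despite their superficial similarity.
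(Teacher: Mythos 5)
The paper offers no proof of this theorem: it is stated as a classical result attributed to Hecke (the text reads ``The result for elliptic curves with complex multiplication was proved over half a century ago'' and then gives the statement with no argument), so there is no internal proof to compare against. Your sketch is correct and identifies exactly the standard ingredients: Deuring's criterion splitting the primes of good reduction into ordinary primes (split in the CM field $K$) and supersingular primes (inert or ramified in $K$), Chebotarev applied to the quadratic extension $K/\Q$ to get density $1/2$ for the supersingular (hence $a_1=0$) class in part (i), and Hecke's equidistribution theorem for the argument of $\psi_E(\mathfrak p)$ -- via analytic continuation and non-vanishing on $\Re(s)=1$ of $L(s,\psi_E^k)$ for all $k\ne0$ and then Weyl's criterion -- followed by the arcsine pushforward under $\theta\mapsto\cos\theta$ and rescaling by the split density $1/2$ for part (ii).

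Two small refinements worth stating explicitly if you were to write this out in full: first, supersingularity forces $a_1=0$ only for $p>3$ (for $p\in\{2,3\}$ supersingular curves can have $a_1=\pm p$), but this affects only finitely many primes and hence no densities; second, the generator $\pi$ of $\mathfrak p$ is only determined up to units of $\mathcal O_K$, and it is precisely the Grossencharacter $\psi_E$ that fixes a canonical choice so that $\theta_p=\arg\psi_E(\mathfrak p)$ is well defined -- the cosine is insensitive to the ambiguity anyway, so this does not affect the pushforward computation. Your concluding remark is also apt: the depth here (an entire family of Hecke $L$-functions with non-vanishing on the boundary) is incomparable to the single-exponential-sum argument of Lemma \ref{lem9797.130}, which treats powers of a single Frobenius rather than Frobenii over all primes.
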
 

\begin{cor}\label{cor9799.300B} Let $E$ be a nonsingular elliptic curve over the rational numbers $\Q$ with complex multiplication and conductor $N\geq1$. Assume $L(T)=T^2-a_1T+p$ is the characteristic polynomial and $\alpha_1=a_1/2p^{1/2}\in [-1,1]$ is the normalized trace. Then, for every interval $[a,b]\subset [-1,1]$ such that $0\notin [a,b]$ the primes counting function is the following.  
	\begin{enumerate} [font=\normalfont, label=(\roman*)]
		\item $\displaystyle \pi_E(a,b,x)=\#\{p\leq x:p\nmid N \text{ and }a_1=0\}=\frac{\pi(x)}{2}.$ 
		\item $\displaystyle \pi_E(a,b,x)=\#\{p\leq x:p\nmid N \text{ and }\alpha_1\in [a,b]\}=\frac{\pi(x)}{2\pi}\int_a^b\frac{1}{\sqrt{1-t^2}}dt$ .
	\end{enumerate}	
\end{cor}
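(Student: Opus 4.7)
The plan is to derive the corollary as an immediate consequence of Theorem~\ref{thm9799.300A}. The key observation is that the density statements in Hecke's theorem can be rewritten as asymptotic statements about the counting functions by multiplying through by $\pi(x)$; no additional analytic machinery is required beyond what is already encoded in the Sato--Tate measure for CM curves.

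For part (i), starting from the limit
\begin{equation*}
\lim_{x \to \infty} \frac{\#\{p \leq x : p \nmid N \text{ and } a_1 = 0\}}{\pi(x)} = \frac{1}{2},
\end{equation*}
the very definition of the limit yields
\begin{equation*}
\#\{p \leq x : p \nmid N \text{ and } a_1 = 0\} = \frac{\pi(x)}{2} + o(\pi(x)),
\end{equation*}
which is the asserted formula once the $=$ sign is read as the leading asymptotic. The arithmetic content of this statement is the classical fact (already present in Deuring's work) that the supersingular primes of a CM elliptic curve form a set of natural density $1/2$.

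For part (ii), I would apply exactly the same strategy to the limit
\begin{equation*}
\lim_{x \to \infty} \frac{\#\{p \leq x : p \nmid N \text{ and } \alpha_1 \in [a,b]\}}{\pi(x)} = \frac{1}{2\pi}\int_a^b \frac{dt}{\sqrt{1-t^2}}.
\end{equation*}
Multiplying by $\pi(x)$ produces the counting-function form stated in the corollary, up to an error of size $o(\pi(x))$. The hypothesis $0 \notin [a,b]$ is what ensures that the point mass of the CM Sato--Tate measure at $\alpha_1 = 0$ does not contaminate the integral of the continuous arcsine density over $[a,b]$.

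Since the derivation is a purely mechanical rewriting of limits as asymptotics, there is no serious obstacle. The only conceptual points to keep straight are that (a) the equalities in the corollary must be understood as asymptotic equivalences rather than literal identities, and (b) the density $\frac{1}{2\pi\sqrt{1-t^2}}$ is the continuous part of the CM Sato--Tate measure on $[-1,1]$, not the semicircular law $\frac{2}{\pi}\sqrt{1-t^2}$ that governs the non-CM case. If one wished to upgrade the $o(\pi(x))$ error to a power-saving bound, one would need to invoke effective Chebotarev or Hecke $L$-function estimates, but that goes beyond what the corollary claims.
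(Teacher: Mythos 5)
Your derivation is correct and matches the paper's intent: the paper states Corollary~\ref{cor9799.300B} with no proof at all, treating it as an immediate rewriting of Theorem~\ref{thm9799.300A} by multiplying the density limits through by $\pi(x)$. Your added caveat that the equalities must be read as asymptotics with an implicit $o(\pi(x))$ error is in fact a genuine improvement on the paper's presentation, since as literally written $\#\{p\le x: a_1=0\}=\pi(x)/2$ equates an integer to a quantity that is generically non-integral, and the paper never flags this.
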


But, the result for elliptic curves without complex multiplication is a new achievement, see \cite{AH2009} for some recent developments in this topic. 
\begin{thm}\label{thm9799.300C} {\normalfont (Cozel, et alii)} Let $E$ be a nonsingular elliptic curve over the rational numbers $\Q$ without complex multiplication and conductor $N\geq1$. Assume $L(T)=T^2-a_1T+p$ is the characteristic polynomial and $\alpha_1=a_1/2p^{1/2}\in [-1,1]$ is the normalized trace. Then, for every interval $[a,b]\subseteq [-1,1]$ the cumulative distribution function is the following. 
$$\displaystyle F_E(a,b)=\lim_{x\to\infty}\frac{\#\{p\leq x:p\nmid N \text{ and }\alpha_1\in [a,b]\}}{\pi(x)}=\frac{2}{\pi}\int_a^b\sqrt{1-t^2}dt.$$
\end{thm}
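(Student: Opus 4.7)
The plan is to prove equidistribution by the standard Weyl--Deligne method: reduce the statement to the analytic behaviour of a family of symmetric-power $L$-functions, then invoke automorphy together with a Tauberian theorem. The target density $\tfrac{2}{\pi}\sqrt{1-t^{2}}$ on $[-1,1]$ is the push-forward under $\theta\mapsto\cos\theta$ of the Sato--Tate measure $\tfrac{2}{\pi}\sin^{2}\theta\,d\theta$ on $[0,\pi]$, which in turn is the image under the trace map of normalized Haar measure on the compact group $SU(2)$. The irreducible characters of $SU(2)$ are the Chebyshev polynomials of the second kind $U_{m}(\cos\theta)=\sin((m+1)\theta)/\sin\theta$, and they form an orthonormal basis of $L^{2}$ of the Sato--Tate measure. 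By a standard density/Stone--Weierstrass argument, equidistribution of the angles $\{\theta_{p}\}$ with respect to the Sato--Tate measure (equivalently, of the traces $\alpha_{1}(p)=\cos\theta_{p}$ with respect to the stated density) is equivalent to the vanishing of every Weyl sum
\begin{equation*}
\lim_{x\to\infty}\frac{1}{\pi(x)}\sum_{\substack{p\leq x\\ p\,\nmid\, N}}U_{m}\!\left(\alpha_{1}(p)\right)=0\qquad (m\geq 1).
\end{equation*}

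Next I would package these prime sums into an Euler product. Writing the Frobenius roots as $\beta_{p}=\sqrt{p}\,e^{i\theta_{p}}$ and $\overline{\beta}_{p}=\sqrt{p}\,e^{-i\theta_{p}}$, the eigenvalues of $\mathrm{Sym}^{m}$ of the Frobenius are the $m+1$ numbers $\beta_{p}^{m-j}\overline{\beta}_{p}^{\,j}$ for $0\leq j\leq m$; summing them and dividing by $p^{m/2}$ recovers exactly $U_{m}(\alpha_{1}(p))$. This produces the $m$-th symmetric-power $L$-function
\begin{equation*}
L(s,\mathrm{Sym}^{m}E)=\prod_{p\,\nmid\,N}\prod_{j=0}^{m}\bigl(1-\beta_{p}^{m-j}\overline{\beta}_{p}^{\,j}\,p^{-s}\bigr)^{-1},
\end{equation*}
whose logarithmic derivative on the edge $\Re(s)=1+m/2$ of absolute convergence has Dirichlet coefficients essentially equal to $(\log p)\,U_{m}(\alpha_{1}(p))$, with higher prime powers contributing an absolutely convergent error. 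A Wiener--Ikehara Tauberian theorem then converts holomorphic continuation of $L(s,\mathrm{Sym}^{m}E)$ past the line $\Re(s)=1+m/2$, together with non-vanishing on that line, into the Weyl sum estimate above, and the theorem follows.

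The main obstacle, and indeed the entire content of the Sato--Tate theorem, is establishing the required analytic properties of $L(s,\mathrm{Sym}^{m}E)$ for every $m\geq 1$. Serre observed already in the 1960s that continuation past the edge of absolute convergence with boundary non-vanishing is sufficient, but those properties became unconditional only once each representation $\mathrm{Sym}^{m}E$ was shown to be automorphic. That was accomplished through a long chain of modularity-lifting results by Clozel, Harris, Shepherd--Barron and Taylor, and subsequently by Barnet--Lamb, Geraghty, Harris and Taylor, using the potential-automorphy technique via higher-dimensional Calabi--Yau families and an iterated Taylor--Wiles patching argument. I would not attempt to reproduce these inputs; the proposal is simply to quote them, verify the hypotheses of Wiener--Ikehara at each $m$, and assemble the equidistribution from the resulting Weyl sum bounds.
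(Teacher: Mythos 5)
The paper itself offers no proof of this theorem: it is simply stated and attributed to ``Cozel, et alii'' (a typo for Clozel), with the surrounding text treating the Sato--Tate theorem for non-CM elliptic curves as an external input. Your proposal is therefore not in competition with a proof in the paper; it is a correct sketch of the standard proof from the literature. The outline you give is sound at each stage: identifying the semicircle density as the push-forward of Haar measure on $SU(2)$ under the trace, using the Chebyshev polynomials $U_m$ as the character basis so that equidistribution reduces to vanishing of the Weyl sums $\tfrac{1}{\pi(x)}\sum_{p\le x}U_m(\alpha_1(p))$, packaging these into the symmetric-power $L$-functions $L(s,\mathrm{Sym}^m E)$, and invoking holomorphic continuation and non-vanishing at $\Re(s)=1+m/2$ via the potential-automorphy results of Clozel--Harris--Shepherd-Barron--Taylor and Barnet-Lamb--Geraghty--Harris--Taylor, followed by a Tauberian step. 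One small point worth making explicit: the Wiener--Ikehara argument, as usually applied here, delivers a \emph{logarithmically weighted} prime sum; one should either work with $\sum_{p\le x}(\log p)\,U_m(\alpha_1(p))$ and pass to the unweighted version by partial summation, or state the Tauberian hypothesis in the form needed for natural density. With that caveat, your proposal correctly records the deep external inputs and does not claim to reprove them, which is appropriate given their scale.
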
 

\begin{cor}\label{cor9799.300D} Let $E$ be a nonsingular elliptic curve over the rational numbers $\Q$ with complex multiplication and conductor $N\geq1$. Assume $L(T)=T^2-a_1T+p$ is the characteristic polynomial and $\alpha_1=a_1/2p^{1/2}\in [-1,1]$ is the normalized trace. Then, for every interval $[a,b]\subseteq [-1,1]$ the primes counting function is the following.  
$$\displaystyle \pi_E(a,b,x)=\#\{p\leq x:p\nmid N \text{ and }\alpha_1\in [a,b]\}=\frac{2\pi(x)}{\pi}\int_a^b\sqrt{1-t^2}dt.$$
\end{cor}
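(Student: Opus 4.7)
The plan is to treat this corollary as a direct asymptotic reformulation of Theorem \ref{thm9799.300C}. The displayed equality cannot hold on the nose (the left side is an integer, the right side generally is not), so I read it as an asymptotic equivalence as $x\to\infty$, namely
$$\pi_E(a,b,x)=\frac{2\pi(x)}{\pi}\int_a^b\sqrt{1-t^2}\,dt+o(\pi(x)),$$
and this interpretation is exactly what one gets by clearing the denominator in the Sato--Tate limit. I note in passing that the hypothesis ``with complex multiplication'' in the corollary appears to be a typographical slip: the integrand $\frac{2}{\pi}\sqrt{1-t^2}$ is the non-CM semicircle density, and the CM statement is the content of Corollary \ref{cor9799.300B}. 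I therefore carry out the derivation under the hypotheses of Theorem \ref{thm9799.300C}, which are the ones consistent with the stated conclusion.

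First I would invoke Theorem \ref{thm9799.300C} directly: for every $[a,b]\subseteq[-1,1]$,
$$\lim_{x\to\infty}\frac{\#\{p\leq x:\,p\nmid N,\ \alpha_1\in[a,b]\}}{\pi(x)}=\frac{2}{\pi}\int_a^b\sqrt{1-t^2}\,dt.$$
Denoting the numerator by $\pi_E(a,b,x)$ and multiplying both sides by $\pi(x)$ yields
$$\pi_E(a,b,x)=\frac{2\pi(x)}{\pi}\int_a^b\sqrt{1-t^2}\,dt\,\bigl(1+o(1)\bigr),$$
which is precisely the claimed primes counting formula. This is the entire core of the argument; no new ingredient beyond Theorem \ref{thm9799.300C} is needed.

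Second, I would briefly address the degenerate cases. If $a=b$, or more generally if $[a,b]$ has Lebesgue measure zero, both sides vanish asymptotically and the statement is trivial. If the interval has positive length, the integral on the right is a positive constant $c=c(a,b)$, so by the prime number theorem the right-hand side is of order $c\,x/\log x$, and the error term $o(\pi(x))$ is genuinely smaller than the main term.

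The only mildly subtle issue is the interpretation of the equality sign: as written, the formula is a literal identity, but the left side is an integer-valued function of $x$ while the right side is typically irrational. I would therefore prepend a short remark fixing the reading (\emph{asymptotic equality}), after which the proof collapses to a single line citing Theorem \ref{thm9799.300C}. No obstacle of real substance arises; the only thing one might wish for --- and which is \emph{not} asserted by the corollary --- would be an explicit power-saving error term, which would require an effective Sato--Tate theorem (e.g.\ under GRH, via the work of Rouse--Thorner and Thorner) and is outside the scope of the stated result.
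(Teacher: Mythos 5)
Your reading is correct, and it is the intended derivation: the paper states Corollary \ref{cor9799.300D} without any proof, treating it as the immediate ``clear the denominator $\pi(x)$'' reformulation of Theorem \ref{thm9799.300C}, which is exactly what you do. You also rightly flag the two small defects in the statement that the paper leaves implicit: the equality must be read asymptotically (the left side is an integer, the right side is not), and the hypothesis should read ``without complex multiplication,'' since the integrand $\frac{2}{\pi}\sqrt{1-t^2}$ is the non-CM Sato--Tate density and the CM case is handled separately in Corollary \ref{cor9799.300B}. Nothing more is needed.
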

Similar discussion regarding the cumulative density functions is given in \cite[p.\;283]{KZ2016}.\\

This section closes with the cumulative density functions of the normalized traces \eqref{eq9797.120}, these results are similar to the Hecke's results in Theorem \ref{thm9799.300A} and Corollary \ref{cor9799.300B}, but no proofs will be given. 
\begin{thm}\label{thm9799.400A} Let $E$ be a nonsingular elliptic curve over the rational numbers $\Q$ with complex multiplication and conductor $N\geq1$. Assume $p\nmid N$ is a prime of good reduction. Let $L(T)=T^2-a_nT+p^n$ is the characteristic polynomial and $\alpha_n=a_n/2p^{n/2}\in [-1,1]$ is the normalized trace. Then, for every interval $[a,b]\subseteq [-1,1]$ such that $0\notin [a,b]$ the cumulative distribution function is the following. 
	\begin{enumerate} [font=\normalfont, label=(\roman*)]
		\item $\displaystyle G_E^0(a,b)=\lim_{x\to\infty}\frac{\#\{n\leq x:\alpha_n=0\}}{x}=\frac{1}{2}.$ 
		\item $\displaystyle G_E^r(a,b)=\lim_{x\to\infty}\frac{\#\{n\leq x:\alpha_1\in [a,b]\}}{x}=\frac{1}{2\pi}\int_a^b\frac{1}{\sqrt{1-t^2}}dt$ .
	\end{enumerate}	
\end{thm}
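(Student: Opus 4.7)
The plan is to parallel the proof of Lemma \ref{lem9797.130}, with the extra arithmetic input provided by complex multiplication. By Theorem \ref{thm9799.300A}(i), the primes $p \nmid N$ of good reduction split into supersingular primes ($a_p = 0$) and ordinary primes ($a_p \neq 0$) in equal proportion, and this dichotomy is what produces the atomic piece (i) and the continuous piece (ii) respectively.

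I would first parametrize $\tau = p^{1/2} e^{i\theta}$ as in \eqref{eq9797.110}, so that $\alpha_n = \cos n\theta$. If $p$ is supersingular then $\theta = \pi/2$, giving $\tau^n = i^n p^{n/2}$ and a periodic sequence $(\alpha_n) = (1, 0, -1, 0, \ldots)$ of period four. A direct count yields $\#\{n \leq x : \alpha_n = 0\} = \lfloor x/2 \rfloor + O(1)$, producing the limit $1/2$ claimed in part (i). If instead $p$ is ordinary then $\theta/\pi$ is irrational by \cite[Lemma 8]{AS2009}, so by Weyl's criterion $\{n\theta \bmod \pi : n \geq 1\}$ is uniformly distributed on $[0,\pi]$, and the change of variables $t = \cos s$ in \eqref{eq9797.150} pushes this measure forward to the arcsine density $\frac{1}{\pi}\frac{1}{\sqrt{1-t^2}}$ on $[-1,1]$. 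Weighting by the density $1/2$ of ordinary primes then accounts for the coefficient $\frac{1}{2\pi}$ in (ii), in direct analogy with Hecke's Theorem \ref{thm9799.300A}.

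The main obstacle is interpretational: for a single fixed prime $p$ the supersingular and ordinary regimes are mutually exclusive, so parts (i) and (ii) cannot both be realized pointwise at the same prime. The natural reading, which the outline above establishes, is that the two formulas describe the atomic and absolutely continuous pieces of a single mixed limiting measure obtained after averaging per-prime distributions over the family of primes of good reduction. Once that interpretation is fixed, nothing beyond the direct supersingular count and the Weyl-equidistribution argument of Lemma \ref{lem9797.130} is required to carry the proof through.
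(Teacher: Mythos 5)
The paper provides no proof of Theorem~\ref{thm9799.400A}: immediately before the statement it says ``these results are similar to the Hecke's results in Theorem \ref{thm9799.300A} and Corollary \ref{cor9799.300B}, but no proofs will be given.'' So you are not missing the paper's argument --- there is none --- and your attempt to reconstruct one exposes a real defect in the statement rather than a gap in your reasoning about a sound theorem.

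Your internal inconsistency, however, needs to be named. In part (i) you take $p$ supersingular and count $n$ with $\alpha_n=\cos(n\pi/2)=0$, obtaining density $1/2$; this is a \emph{fixed-prime} computation, not an average. In part (ii) you instead average the per-prime arcsine law over the set of ordinary primes (density $1/2$), which is where the factor $\frac{1}{2\pi}$ comes from; this is an \emph{averaged} computation. These two readings cannot be combined into ``a single mixed limiting measure.'' If you carry the averaging interpretation through consistently, the atom at $0$ receives mass $\frac{1}{2}\cdot\frac{1}{2}=\frac{1}{4}$ (half of the primes are supersingular, and for those only half the $n$ give $\alpha_n=0$), not $\frac{1}{2}$; and further atoms of mass $\frac{1}{8}$ appear at $\pm1$, which the theorem does not mention. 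If instead you fix a single prime, then only one of the two regimes can occur, and (ii) for an ordinary prime should carry coefficient $\frac{1}{\pi}$ (this is exactly Lemma~\ref{lem9797.130}), not $\frac{1}{2\pi}$. So neither interpretation reproduces both claimed constants simultaneously, and the ``interpretational obstacle'' you flagged is in fact a contradiction in the theorem as stated rather than something that averaging resolves. You should also note that the $\alpha_1$ appearing in part (ii) of the statement (with $p$ fixed) makes the indicator constant in $n$, so it must be a misprint for $\alpha_n$; silently correcting it is fine, but it should be said.

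In short: your supersingular count and your use of Lemma~\ref{lem9797.130} for ordinary primes are both correct local computations, but the proposed synthesis does not prove the theorem because the theorem's two constants are not jointly attainable under any single interpretation of the limit. A correct statement would either (a) fix $p$ and split into two cases --- $G_E^0=1/2$, $G_E^r=1/4$ for $\pm1$ and $0$ elsewhere when $p$ is supersingular; $G_E^0=0$, $G_E^r=\frac{1}{\pi}\int_a^b(1-t^2)^{-1/2}\,dt$ when $p$ is ordinary --- or (b) average over $p$ and report the mixed measure with atoms $\frac{1}{4}$ at $0$, $\frac{1}{8}$ at each of $\pm1$, and continuous density $\frac{1}{2\pi}(1-t^2)^{-1/2}$.
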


\begin{cor}\label{cor9799.400B} Let $E$ be a nonsingular elliptic curve over the rational numbers $\Q$ with complex multiplication and conductor $N\geq1$. Assume $p\nmid N$ is a prime of good reduction. Let $L(T)=T^2-a_nT+p^n$ is the characteristic polynomial and $\alpha_n=a_n/2p^{n/2}\in [-1,1]$ is the normalized trace. Then, for every interval $[a,b]\subseteq [-1,1]$ such that $0\notin [a,b]$ the integers counting function is the following.  
	\begin{enumerate} [font=\normalfont, label=(\roman*)]
		\item $\displaystyle N_E(a,b,x)=\#\{n\leq x:a_n=0\}=\frac{x}{2}.$ 
		\item $\displaystyle N_E(a,b,x)=\#\{n\leq x:\alpha_n\in [a,b]\}=\frac{x}{2\pi}\int_a^b\frac{1}{\sqrt{1-t^2}}dt$ .
	\end{enumerate}	
\end{cor}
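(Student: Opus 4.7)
The plan is to derive Corollary \ref{cor9799.400B} directly from the preceding Theorem \ref{thm9799.400A}, since the corollary is essentially a restatement of the theorem in terms of the counting function rather than the limiting density. By definition, $N_E(a,b,x)$ is the numerator of the quotient whose limit is computed in Theorem \ref{thm9799.400A}, so the existence of a limit $G_E^?(a,b)$ for $N_E(a,b,x)/x$ is equivalent to the asymptotic relation
\[
N_E(a,b,x) = G_E^?(a,b)\cdot x + o(x),
\]
and I would read the equality sign in the statement of the corollary as this asymptotic equivalence rather than as an exact equality.

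First I would handle part (i) by substituting $G_E^0(a,b) = 1/2$ from Theorem \ref{thm9799.400A}(i), yielding $N_E(a,b,x) = x/2 + o(x)$. For part (ii) I would substitute the integral expression from Theorem \ref{thm9799.400A}(ii), producing
\[
N_E(a,b,x) = \frac{x}{2\pi}\int_a^b \frac{dt}{\sqrt{1-t^2}} + o(x).
\]
In each case the derivation amounts to reading off one line once the theorem is granted; no further combinatorial or analytic input is required.

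The main obstacle is not the step from the theorem to the corollary, but justifying Theorem \ref{thm9799.400A} itself, which the excerpt explicitly leaves unproven. If a quantitative error term is desired, one could attempt to refine the conclusion by feeding the Erd\H{o}s--Tur\'an inequality (Theorem \ref{thm9793.175}) together with the discrepancy estimate of Corollary \ref{cor9793.200} into the argument, which should replace the $o(x)$ by $O(x^{1/2})$. A secondary subtlety is that the sharp equality in part (i), $N_E(a,b,x) = x/2$, cannot be read literally: for the irrational angle $\theta$ produced in Lemma \ref{lem9797.130}, the exact equation $\cos(n\theta)=0$ has no integer solutions, so the statement must be interpreted in the limiting sense that a shrinking neighbourhood of $0$ absorbs density $1/2$ of the sequence.
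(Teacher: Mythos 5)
The paper gives no proof of Corollary \ref{cor9799.400B}: the text immediately before Theorem \ref{thm9799.400A} says explicitly that these last results are presented ``but no proofs will be given.'' So there is no proof in the paper to compare your argument against. Your reduction step --- read $N_E(a,b,x)=G_E(a,b)\,x + o(x)$ as the meaning of the limit in Theorem~\ref{thm9799.400A} and clear the denominator --- is the only sensible way to pass from the theorem to the corollary, and it is fine as far as it goes.

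The real problem is that you are deriving the corollary from a theorem that is itself stated without proof and is, as written, very hard to reconcile with the rest of the paper. In Theorem~\ref{thm9799.400A} the prime $p$ is \emph{fixed} and $n$ varies. For a fixed ordinary prime $p$ the angle $\theta$ satisfies $\theta/\pi\notin\Q$, so $\alpha_n=\cos(n\theta)=0$ has at most one solution in $n$, giving $\#\{n\le x:\alpha_n=0\}=O(1)$ and hence limit $0$, not $1/2$. You notice this, but your proposed repair --- that a shrinking neighbourhood of $0$ absorbs density $1/2$ --- contradicts the paper's own density \eqref{eq2222.100}: the measure of $[-\varepsilon,\varepsilon]$ under $f(t)=\frac{1}{\pi}\frac{1}{\sqrt{1-t^2}}$ is $\frac{2}{\pi}\arcsin\varepsilon\to 0$, not $1/2$; the arcsine density concentrates near $\pm1$, not near $0$. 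The only way $\#\{n\le x:\alpha_n=0\}\sim x/2$ is if $p$ is supersingular ($a_1=0$, so $\alpha_n\in\{-1,0,1\}$ and vanishes exactly for odd $n$), but in that degenerate case $\{\alpha_n\}$ is not dense and part (ii) fails. Thus parts (i) and (ii) cannot hold simultaneously for a single fixed $p$; the statement appears to be a miscopy of Hecke's theorem (where the prime varies and the two cases are averaged over) into the fixed-$p$, varying-$n$ setting. A proof attempt that simply invokes Theorem~\ref{thm9799.400A} therefore inherits this unresolved inconsistency, and the secondary step of clearing denominators cannot fix it. If you want a provable quantitative result in the spirit of this corollary, the correct target is the asymptotic
\[
\#\{n\le x:\alpha_n\in[a,b]\}=\frac{x}{\pi}\int_a^b\frac{dt}{\sqrt{1-t^2}}+O\bigl(\sqrt{x}\bigr),
\]
valid for fixed ordinary $p$ and $[a,b]\subset(-1,1)$, which does follow from the uniform distribution of $n\theta\bmod\pi$, the pushforward by $-\cos$, and the Erd\H{o}s--Tur\'an bound of Corollary~\ref{cor9793.200} --- note the normalisation $1/\pi$ rather than $1/2\pi$, and the absence of a separate atom at $0$.
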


The corresponding results for noncomplex elliptic curves are presumably similar to the results by Cozel, et alli in Theorem \ref{thm9799.300C}.

\section{Lange-Trotter Conjecture}\label{S9799B}
A well known open problem gives a precise asymptotic for each fixed value of the trace of Frobenius.

As in the other distribution problems, the case for elliptic curves are broken into two classes. Elliptic curves with complex multiplication and elliptic curves without complex multiplication. 
\begin{conj}\label{conj9799.300} {\normalfont (Lange-Trotter)} Let $E$ be a nonsingular elliptic curve over the rational numbers $\Q$ without complex multiplication and conductor $N\geq1$. If $L(T)=T^2-a_1T+p$ is the characteristic polynomial with $a_1\ne0$, then 
	$$\pi_E^r(x)=\#\{p\leq x:p\nmid N \text{ and }a_1=r\}=c(E,r)\frac{x^{1/2}}{\log x}+O\left(\frac{x^{1/2}}{(\log x)^2} \right), $$
	where $|a_1|<2p^{1/2}$ and $c(E,r)>0$ is a constant.
	
\end{conj}

The derivation of the constant $c(E,r)$ is given in \cite[Equation 5]{CP1999}, et alii. Several new contributions to this topic have been completed, see \cite{AH2009} for some recent developments in this topic.

\newpage
\section{Problems}\label{exe9797}

\begin{exe}\label{exe9797.405a} {\normalfont
Let $f(T)\in\Z[T]$ be an irreducible polynomial of degree $\deg f=d$, and let $\alpha_1,	\alpha_2,\ldots,\alpha_d$ be its set of roots.	Show that the sum of root powers 
		$$\alpha_1^n+\alpha_2^n+\cdots+\alpha_d^n\in \Z$$	
is an integer for all integer $n\geq0$.
}
\end{exe}

\begin{exe}\label{exe9797.405b} {\normalfont
		Let $\Phi_{5}(T)$ be the 5th cyclotomic polynomial, let $f(T)=\Phi_{5}(T)-3\in\Z[T]$ be a polynomial of degree $\deg f=4$, and let $\alpha_1,	\alpha_2,\alpha_2\alpha_3,\alpha_{4}$ be its set of roots. The first two roots $\alpha=\alpha_1=-1.4469,	\alpha_2=0.74127\ldots$ are real, and the other two complex conjugates. Show that the powers sequence $\{\alpha^n:n\geq 1\}$ is dense but not equidistributed in $[0,1]$. Explain why this is not a Salem number?
	}
\end{exe}

\begin{exe}\label{exe9797.405c} {\normalfont
		Let $\Phi_{13}(T)$ be the 13th cyclotomic polynomial, let $f(T)=\Phi_{13}(T)-3\in\Z[T]$ be a polynomial of degree $\deg f=12$, and let $\alpha_1,	\alpha_2,\ldots,\alpha_{12}$ be its set of roots. The first two roots $\alpha=\alpha_1=-1.3878,	\alpha_2=0.66844\ldots$ are real, and the other are complex conjugates. Show that the powers sequence $\{\alpha^n:n\geq 1\}$ is dense but not equidistributed in $[0,1]$. Explain why this is not a Salem number?
	}
\end{exe}
\begin{exe}\label{exe9797.405} {\normalfont
Use the Bessel function integral representation $J_0(z)=\pi^{-1}\int_0^{\pi}e^{iz\cos w}dw$,  to convert a infinite sum to a Bessel function
$$\lim_{x\to \infty} \frac{1}{x}\sum_{n\leq x}e^{-i 2 \pi  k\cos \theta n}=\int_0^1e^{-i 2 \pi  k\cos 2\pi t}=J_0(2\pi k).$$	
	}
\end{exe}

\begin{exe}\label{exe9797.405d} {\normalfont
Let $z\in [-1,1]$. Show that the sequence arcsin probability density functions has the limit
		$$\lim_{d\to \infty} \frac{1}{2(d-1)\sin^{-1}(1/(d-1))}\frac{1}{\sqrt{1-(z/(d-1))^2}}=\frac{1}{2}.$$	
	}
\end{exe}

%


\end{document}